\documentclass[sigconf,nonacm]{acmart}

\AtBeginDocument{%
  }

\usepackage{algorithm}
\usepackage{algpseudocode}
\usepackage{amsmath}
\usepackage{amsthm}
\usepackage{bm}
\usepackage[subrefformat=parens]{subcaption}
\usepackage{mathrsfs}
\usepackage{mathtools}
\usepackage{float}
\usepackage{comment}
\theoremstyle{plain}
\newtheorem{proposition}{Proposition}
\usepackage{booktabs}
\usepackage{siunitx}
\usepackage{url}
\usepackage{multirow}

\setcopyright{none}
\renewcommand\footnotetextcopyrightpermission[1]{}

\begin{document}

\title{Safe screening rules for portfolio optimization \\with linear and cardinality constraints}

\author{Nanari Wada}
\affiliation{%
  \institution{University of Tsukuba}
  \city{Tsukuba}
  \state{Ibaraki}
  \country{Japan}}
\email{s2520497@u.tsukuba.ac.jp}

\author{Shunnosuke Ikeda}
\affiliation{%
  \institution{University of Tsukuba}
  \city{Tsukuba}
  \state{Ibaraki}
  \country{Japan}
}
\email{ikeda@cs.tsukuba.ac.jp}

\author{Yuichi Takano}
\affiliation{%
 \institution{University of Tsukuba}
 \city{Tsukuba}
 \state{Ibaraki}
 \country{Japan}
}
\email{ytakano@sk.tsukuba.ac.jp}

\author{Jun-ya Gotoh}
\affiliation{%
 \institution{Chuo University}
 \city{Bunkyo-ku}
 \state{Tokyo}
 \country{Japan}
}
\email{jgoto@kc.chuo-u.ac.jp}

\renewcommand{\shortauthors}{Wada et al.}

\begin{abstract}
In portfolio optimization, a cardinality constraint, which limits the number of assets held, plays a key role in cutting down monitoring and transaction costs.
However, the resulting problem is NP-hard and becomes computationally difficult to solve globally as the number of candidate assets grows. 
Safe screening addresses this difficulty by fixing decision variables before optimization without excluding any globally optimal solution, thereby reducing the problem size while preserving optimality guarantees. 
We propose safe screening rules for cardinality-constrained portfolio optimization with a convex quadratic objective function and linear constraints. 
Using a perspective relaxation of the $\ell_2$-regularization term and Fenchel duality, we derive asset-specific scores that incorporate the Lagrange multipliers of the linear constraints. 
Combined with a relaxation-based lower bound and a feasible-solution upper bound, these scores safely fix binary asset-selection variables to zero or one. 
Experiments on S\&P 500 and Russell 2000 datasets show substantial computational improvements on challenging cases, particularly under moderate or strong regularization and less stringent return requirements. 
These results demonstrate the effectiveness of safe screening as an optimality-preserving preprocessing technique that greatly boosts computational efficiency in large-scale cardinality-constrained portfolio optimization.
\end{abstract}

\keywords{safe screening, cardinality constraint, portfolio optimization,
mixed-integer optimization, perspective relaxation, Fenchel duality}

\maketitle
\section{Introduction}
\subsection{Background}
Portfolio optimization is the problem of determining how to allocate investments across multiple financial assets while accounting for factors such as risk and expected return.
Since the seminal work of Markowitz~\cite{markowitz1952portfolio},
portfolio optimization has developed into a broad research field encompassing various risk measures, estimation uncertainty, transaction costs, practical investment constraints, and computational solution methods~\cite{gunjan2023brief,salo2024fifty,Lee2024}.
In particular, large-scale portfolio optimization has been recognized as an important computational problem in practical investment management~\cite{perold1984large}.

As the number of assets included in a portfolio increases, the associated monitoring and transaction costs also tend to increase~\cite{kolm2014sixty}.
Therefore, in practical applications, it is often necessary to control the number of assets selected for investment~\cite{bertsimas1999portfolio}.
This requirement gives rise to cardinality-constrained portfolio optimization models, which determine an optimal asset allocation subject to an upper bound on the number of nonzero elements in the portfolio weight vector \cite{bienstock1996computational,gao2013optimal}.

However, cardinality-constrained portfolio optimization is known to be NP-hard~\cite{gao2013optimal}, making it computationally difficult to obtain an optimal solution when the number of candidate assets is large~\cite{bienstock1996computational}.
Although recent advances in mixed-integer optimization solvers have facilitated efforts to solve such problems to proven optimality \cite{tillmann2024cardinality}, the computational burden remains substantial for large-scale instances.
Therefore, there is a strong demand for efficient preprocessing techniques to accelerate the solution of cardinality-constrained portfolio optimization problems.

\subsection{Related Work}
Cardinality-constrained portfolio optimization has been studied using a variety of solution approaches.
Chang et al.~\cite{chang2000heuristics} developed heuristic methods for constructing the cardinality-constrained efficient frontier, whereas Bienstock~\cite{bienstock1996computational} investigated mixed-integer quadratic optimization formulations.
Zheng et al.~\cite{zheng2014improving} used semidefinite optimization to compute a diagonal decomposition that tightens the continuous relaxation of the perspective reformulation.
Bertsimas and Cory-Wright~\cite{bertsimas2022scalable} proposed a
scalable cutting-plane algorithm for sparse portfolio selection and summarized the scalability limitations of existing approaches.
Kobayashi et al.~\cite{kobayashi2021bilevel} developed a bilevel cutting-plane algorithm for exactly solving
cardinality-constrained mean-CVaR portfolio optimization problems, and Kobayashi et al.~\cite{kobayashi2023cardinality} proposed a
cutting-plane algorithm combined with positive semidefinite matrix completion for cardinality-constrained distributionally robust portfolio optimization.
Although these studies have significantly advanced the solution of cardinality-constrained portfolio optimization problems, exact optimization remains computationally demanding for large-scale instances.

Safe screening is a preprocessing technique that fixes decision variables without excluding any optimal solution.
It has been extensively studied in the field of machine learning with sparsity.
El Ghaoui et al.~\cite{elghaoui2012safe} proposed safe feature elimination rules for the Lasso and related sparse learning problems.
Fercoq et al.~\cite{fercoq2015mind} subsequently developed duality-gap-based safe screening rules that become increasingly effective as the optimization algorithm converges.
This paradigm has recently been extended to cardinality-constrained sparse learning.
Atamt{\"u}rk and G{\'o}mez~\cite{atamturk2020safe} proposed safe screening rules for cardinality-constrained linear regression based on perspective relaxation and Fenchel duality.
This framework for safe screening has been applied to logistic regression by Deza and Atamt{\"u}rk~\cite{deza2022safe} and to Poisson regression by Kurihara and Izunaga~\cite{kurihara2025scalable}.
However, these studies primarily address sparse learning models without additional linear equality or inequality constraints.
They therefore do not cover portfolio optimization models that incorporate practical linear constraints, such as budget and nonnegativity constraints.

A related line of research has considered safe screening in regularized portfolio optimization.
Du et al.~\cite{du2023high} proposed a method combining
$\ell_1$ regularization with screening for high-dimensional portfolio optimization.
Their study demonstrates the potential effectiveness of preprocessing for reducing the number of candidate assets in portfolio selection.
However, it does not address safe screening for the
cardinality-constrained portfolio optimization considered in this paper.

\subsection{Contribution}
Building on Atamt{\"u}rk and G{\'o}mez~\cite{atamturk2020safe}, we extend safe screening rules based on perspective relaxation for
cardinality-constrained sparse regression to cardinality-constrained portfolio optimization with linear constraints.
This extension is nontrivial because it deviates from existing rules due to the objective functions and constraints involved in realistic portfolio optimization.
Our main contribution is the derivation of asset-specific screening scores and safe variable-fixing rules that explicitly incorporate additional Lagrange multipliers in the Fenchel dual optimal solution.
The resulting framework applies to cardinality-constrained portfolio optimization models with convex quadratic objectives and practical linear constraints.

We evaluate our screening rules on problem instances constructed from S\&P 500 and Russell 2000 datasets.
The computational results show substantial improvements on challenging instances, particularly under moderate or strong regularization and less stringent return requirements.
For the S\&P 500 dataset, our method frequently achieved speedups of more than tenfold for problem instances that require relatively long computation times, while for the Russell 2000 dataset, the reduced problems were solved to global optimality in several cases where direct optimization reached the time limit.
These results support safe screening as an effective optimality-preserving preprocessing technique for large-scale cardinality-constrained portfolio optimization.

\section{Problem Formulation}
In this section, we formulate the cardinality-constrained portfolio optimization problem.
We then present its mixed-binary optimization reformulation.
Throughout this paper, we denote the set of consecutive positive integers as $[n] := \{1,2,\dots,n\}$.
\subsection{Cardinality-Constrained Portfolio Optimization Problem}

We consider the following class of cardinality-constrained portfolio
optimization problems:
\begin{subequations}\label{original}
\begin{align}
\min_{\bm{x}\in\mathbb{R}^{n}}
\quad &
\frac{1}{2}\bm{x}^{\top}\bm{Q}\bm{x}
+\bm{c}^{\top}\bm{x}
+\tau\|\bm{x}\|_{2}^{2}
\label{obj:original}
\\
\mathrm{s.~t.}\quad
&\|\bm{x}\|_{0}\le\kappa,
\label{cons:original-card}\\
&\bm{A}\bm{x}=\bm{b},
\label{cons:original-equality}\\
&\bm{W}\bm{x}\ge\bm{h}.
\label{cons:original-neq}
\end{align}
\end{subequations}
Here, $\bm{x}\in\mathbb{R}^{n}$ is a vector of decision variables representing the portfolio weights of
$n$ candidate assets, and $\bm{Q}\in\mathbb{S}_{+}^{n}$ and $\bm{c}\in\mathbb{R}^{n}$
define the convex quadratic objective.
The matrices $\bm{A}\in\mathbb{R}^{m_1\times n}$ and $\bm{W}\in\mathbb{R}^{m_2\times n}$, together with the vectors $\bm{b}\in\mathbb{R}^{m_1}$ and $\bm{h}\in\mathbb{R}^{m_2}$, define the linear equality and inequality constraints, respectively.
The parameter $\kappa\in[n]$ is the upper bound on the number of assets included in the portfolio, and
$\tau>0$ is the regularization parameter.
The objective function is the sum of a convex quadratic function and an $\ell_2$-regularization term, and encompasses, among others, mean--variance~\cite{gao2013optimal} and index-tracking models~\cite{takeda2013simultaneous}.
From a practical perspective, the $\ell_2$-regularization term reduces the sensitivity of portfolio weights to estimation error and can thereby improve out-of-sample performance~\cite{demiguel2009generalized,bertsimas2022scalable,gotoh2013robust,gotoh2011role}.
Eq.~\eqref{cons:original-card} is a cardinality constraint that limits the number of assets included in the portfolio.
Eq.~\eqref{cons:original-equality} represents linear equality constraints, including the budget constraint requiring the portfolio weights to sum to one. Eq.~\eqref{cons:original-neq} represents linear inequality constraints, including constraints on expected returns and portfolio weights.

\subsection{Mixed-binary Optimization Reformulation}

We introduce a binary decision variable $\bm{z}:= (z_i)_{i\in [n]} \in \{0,1\}^n$ indicating the selection of investment assets.
Here, $z_i=1$ indicates that asset $i$ is included in the investment universe, whereas $z_i=0$ indicates that asset $i$ is excluded from it.
The relationship between the portfolio weight $x_i$ and the binary indicator $z_i$ is represented by the following logical implication constraint:
\begin{equation}
z_i = 0 \Rightarrow x_i = 0,
\quad i\in[n].
\label{logical_imp}
\end{equation}
Eq.~\eqref{logical_imp} ensures that $x_i=0$ whenever $z_i=0$ and, equivalently, that $z_i=1$ whenever $x_i\neq 0$.

Combining the logical implication (Eq.~\eqref{logical_imp}) with the cardinality constraint $\sum_{i=1}^{n}z_i\leq\kappa$, the original problem~\eqref{original} can be
equivalently reformulated as the following mixed-binary optimization problem:
\begin{subequations} \label{binary-reformulation}
\begin{align}
    \min_{\bm{x},\bm{z}}
    \quad &
    \frac{1}{2}\bm{x}^{\top}\bm{Q}\bm{x}
    +\bm{c}^{\top}\bm{x}
    +\tau\lVert\bm{x}\rVert_2^2
    \label{prob:binary-reformulation}
    \\
    \text{s.~t.}\quad
    &
    \bm{A}\bm{x}=\bm{b},
    \quad
    \bm{W}\bm{x} \ge \bm{h},\label{cons:reform-linear}
    \\
    &
    \sum_{i=1}^{n}z_i\leq\kappa,
    \label{cons:reform-cardinality}
    \\
    &
    z_i = 0 \Rightarrow x_i = 0
    \quad (i \in [n]),
    \label{cons:reform-link}
    \\
    &
    \bm{z}\in\{0,1\}^{n}
    \label{cons:reform-binary}.
\end{align}
\end{subequations}

\section{Safe Screening Rules}
In this section, we develop safe screening rules for the
cardinality-constrained portfolio optimization problem~\eqref{original}.
We first explain the basic principle of safe screening and then
derive asset-specific screening scores using a perspective relaxation and Fenchel duality.
Based on these scores, we establish safe exclusion and inclusion
rules.
Finally, we present the screening procedure in an algorithm.
\subsection{Basic Principle of Safe Screening}
Safe screening fixes binary variables while preserving optimality by comparing a valid upper bound with lower bounds derived from subproblems in which individual binary variables are fixed to either zero or one.
Let $f^\star$ denote the optimal objective value of the original problem~\eqref{binary-reformulation}, let $f_{\mathrm{LB}}$ be a lower bound obtained from a relaxation, and let $f_{\mathrm{UB}}$ be the objective value of a feasible solution.
It then follows that
\begin{equation}
f_{\mathrm{LB}}
\leq
f^\star
\leq
f_{\mathrm{UB}}.
\label{eq:basic-bounds}
\end{equation}

For each $i\in[n]$, if a lower bound obtained after imposing $z_i=1$
exceeds $f_{\mathrm{UB}}$, then no optimal solution can satisfy $z_i=1$,
and hence $z_i$ can be fixed to zero.
Similarly, if a lower bound obtained after imposing $z_i=0$
exceeds $f_{\mathrm{UB}}$, then $z_i$ can be fixed to one.
Therefore,
\begin{align}
&
f_{\text{LB}}\text{ with } z_i=1
>
f_{\mathrm{UB}}
\quad\Longrightarrow\quad
z_i=0,
\label{eq:basic-screening-zero}
\\
&
f_{\text{LB}}\text{ with } z_i=0
>
f_{\mathrm{UB}}
\quad\Longrightarrow\quad
z_i=1.
\label{eq:basic-screening-one}
\end{align}

\begin{figure}[t]
    \centering
    \captionsetup{skip=3pt}
    \includegraphics[width=\columnwidth]{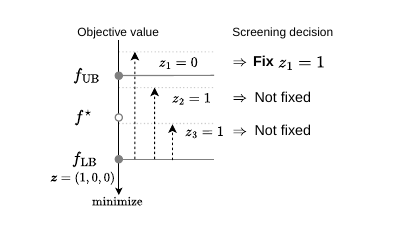}
    \caption{Basic principle of safe screening.}
    \Description{
        Illustration of safe screening based on the comparison between
        a global upper bound and lower bounds obtained by fixing binary
        variables.
    }
    \label{fig:safe-screening-concept}
     \vspace{-10pt}
\end{figure}
Figure~\ref{fig:safe-screening-concept} illustrates the concept of safe screening.
The vector $\bm z=(1,0,0)$ represents the asset-selection pattern that attains the lower-bound value $f_{\mathrm{LB}}$ in the relaxation problem.
Adding a fixing constraint restricts the feasible region of the relaxation problem, and hence the optimal objective value of the resulting restricted relaxation will be larger than $f_{\mathrm{LB}}$.
In the example shown in the figure, the lower bound obtained by imposing $z_1=0$ exceeds $f_{\mathrm{UB}}$.
Since $f_{\mathrm{UB}}$ is a valid upper bound on the optimal objective value of the original problem~\eqref{binary-reformulation}, no solution satisfying $z_1=0$ can be optimal.
Therefore, $z_1$ can be safely fixed to one.
In contrast, the lower bounds obtained by imposing $z_2=1$ and $z_3=1$ do not exceed $f_{\mathrm{UB}}$.
Hence, these particular tests are inconclusive and do not allow $z_2$ or $z_3$ to be fixed.

\subsection{Convex Relaxation Using Perspective Functions}

A standard approach to formulating the logical implication (Eq.~\eqref{logical_imp}) between asset selection and portfolio weights is to use the Big-$M$ constraints $-M z_i \le x_i \le M z_i$, where $M$ is a sufficiently large constant~\cite{bertsimas2022scalable}.
However, Big-$M$ formulations may yield weak continuous relaxations, and excessively large values of $M$ may also cause numerical difficulties~\cite{tillmann2024cardinality,belotti2016handling}.

To obtain a stronger convex relaxation, we apply a perspective
reformulation~\cite{gunluk2010perspective} to the $\ell_2$-regularization term.
For each $i\in[n]$, we define
\begin{equation}
    \phi(x_i,z_i):=
    \begin{cases}
        x_i^2/z_i, & z_i>0,\\
        0, & x_i=0,\ z_i=0,\\
        +\infty, & x_i \neq 0,\ z_i = 0.
    \end{cases}
\end{equation}
The function $\phi$ is the perspective of the convex function and is therefore convex~\cite{HiriartUrrutyLemarechal1993}.
Then, problem~\eqref{binary-reformulation} is equivalently reformulated as
\begin{subequations} \label{perspective}
\begin{align}
    \min_{\bm{x},\bm{z}}
    \quad&
    \frac{1}{2}\bm{x}^{\top}\bm{Q}\bm{x}
    +\bm{c}^{\top}\bm{x}
    +\tau\sum_{i=1}^{n}\phi(x_i,z_i)
    \label{prob:perspective-integer}
    \\
    \text{s.~t.}\quad&
    \bm{A}\bm{x}=\bm{b},\quad
    \bm{W}\bm{x}\geq\bm{h},\label{cons:perspective-linear}
    \\
    &
    \sum_{i=1}^{n}z_i\leq\kappa,\label{cons:perspective-card}
    \\
    &
    \bm{z}\in\{0,1\}^{n}.
    \label{cons:perspective-integer}
\end{align}
\end{subequations}
Indeed, when $z_i=1$, we have
$\phi(x_i,1)=x_i^2$, which coincides with the \(i\)th quadratic
component of the original $\ell_2$-regularization term.
When $z_i=0$, the objective value is finite only if $x_i=0$; therefore, the logical implication (Eq.~\eqref{cons:reform-link}) holds.
Therefore, problem~\eqref{perspective} is equivalent to the mixed-binary optimization problem~\eqref{binary-reformulation}.

Relaxing \(\bm{z} \in\{0, 1\}^n\) to \(\bm{ z} \in[0, 1]^n\) 
yields the following convex relaxation:
\begin{subequations} \label{relaxation}
\begin{align}
    \min_{\bm{x},\bm{z}}
    \quad&
    \frac{1}{2}\bm{x}^{\top}\bm{Q}\bm{x}
    +\bm{c}^{\top}\bm{x}
    +\tau\sum_{i=1}^{n}\phi(x_i,z_i)
    \label{prob:relaxation}
    \\
    \text{s.~t.}\quad&
    \text{Eqs.}~\eqref{cons:perspective-linear}\text{--}\eqref{cons:perspective-card}
    \\
    &
    \bm{z}\in[0,1]^n.
\end{align}
\end{subequations}
This problem is convex and provides a valid lower bound on the
original problem~\eqref{binary-reformulation}.
Moreover, if it is feasible, then it admits an optimal solution.
For later use, we define the feasible sets for $\bm{x}$ and $\bm{z}$,
respectively, as
\begin{align}
    \mathcal{X}
    &:=
    \left\{
        \bm{x}\in\mathbb{R}^n
        \;\middle|\;
        \bm{A}\bm{x}=\bm{b},\
        \bm{W}\bm{x}\geq\bm{h}
    \right\},
    \\
    \mathcal{Z}
    &:=
    \left\{
        \bm{z}\in[0,1]^n
        \;\middle|\;
        \bm{1}^{\top}\bm{z}\leq\kappa
    \right\}.
\end{align}
Accordingly, the feasible set of the convex relaxation
problem~\eqref{relaxation} is
\begin{equation}
    \mathcal{F}
    :=
    \mathcal{X}\times\mathcal{Z}.
\end{equation}

We also give a conic representation of the perspective formulation~\cite{frangioni2006perspective,gunluk2010perspective,atamturk2020safe}.
We introduce auxiliary variable $\bm{t}:=(t_i)_{i\in [n]}\in \mathbb{R}_{+}^n$ for representing perspective functions.
The resulting conic representation of problem~\eqref{relaxation} is given by
\begin{subequations}
\label{relaxation-soc}
\begin{align}
    \min_{\bm t,\bm x,\bm z}\quad
    & \frac{1}{2}\bm x^\top\bm Q\bm x
      + \bm c^\top\bm x
      + \tau\sum_{i=1}^{n} t_i \\
    \mathrm{s.~t.}\quad
    &
    \bm{A}\bm{x}=\bm{b},\quad
    \bm{W}\bm{x}\geq\bm{h},\label{cons:soc-linear}
    \\
    & x_i^2\leq t_i z_i\quad (i \in [n]),
      \label{cons:perspective-relaxation-soc} \\
    & \bm{t}\geq\bm 0,
    \\&
    \sum_{i=1}^{n}z_i\leq\kappa,\label{cons:soc-card}
    \\
    &
    \bm{z}\in[0,1]^n.\label{cons:perspective-relaxation-domain}
\end{align}
\end{subequations}
The inequalities in Eq.~\eqref{cons:perspective-relaxation-soc} are representable by rotated second-order cones~\cite{alizadeh2003second}.
The corresponding mixed-binary optimization formulation is obtained by replacing $\bm{z}\in[0,1]^n$ with $\bm z\in\{0,1\}^n$ as
\begin{subequations}
\label{binary-soc}
\begin{align}
    \min_{\bm t,\bm x,\bm z}\quad
    & \frac{1}{2}\bm x^\top\bm Q\bm x
      + \bm c^\top\bm x
      + \tau\sum_{i=1}^{n} t_i \\
    \mathrm{s.t.}\quad
    &
    \text{Eqs.}~\eqref{cons:soc-linear}\text{--}\eqref{cons:soc-card}
    \\
    &
    \bm{z}\in\{0,1\}^n.
\end{align}
\end{subequations}

\subsection{Fenchel Dual Problem}

Using the conjugate representation of the perspective
function~\cite{atamturk2020safe}, we have
\begin{equation}
\phi(x_i,z_i)
=
\max_{p_i\in\mathbb{R}} \left\{p_i x_i-\frac{p_i^2}{4}z_i\right\}.
\label{eq:perspective-fenchel-representation}
\end{equation}
Accordingly, we define
\begin{equation}
    \mathcal{L}_{F}
    (\bm{x},\bm{z},\bm{p})
    :=
    \frac{1}{2}\bm{x}^{\top}\bm{Q}\bm{x}
    +\bm{c}^{\top}\bm{x}
    +
    \tau\sum_{i=1}^{n}
    \left(
        p_i x_i-\frac{p_i^2}{4}z_i
    \right).\label{obj:fenchel}
\end{equation}
By Eq.~\eqref{eq:perspective-fenchel-representation}, the optimal objective value of the convex relaxation problem~\eqref{relaxation} can be written as
\begin{equation}
    f_{\mathrm{LB}}
    =
    \min_{(\bm{x},\bm{z})\in\mathcal{F}}
    \max_{\bm{p}\in\mathbb{R}^n}
    \mathcal{L}_{F}
    (\bm{x},\bm{z},\bm{p}).
\end{equation}

Since the convex relaxation is feasible and $1\leq\kappa<n$, the relative-interior condition for strong duality is satisfied.
Hence, the strong duality theorem for convex optimization~\cite{rockafellar1970convex} guarantees attainment of a dual optimal solution and allows the order of optimization to be interchanged.
The resulting Fenchel dual problem associated with the convex relaxation problem~\eqref{relaxation} is
\begin{equation}
f_{\mathrm{LB}}
=
\max_{\bm{p}\in\mathbb{R}^n}
\min_{(\bm{x},\bm{z})\in\mathcal{F}}
\mathcal{L}_{F}(\bm{x},\bm{z},\bm{p}).
\label{prob:fenchel-dual}
\end{equation}

\subsection{Screening Scores}

Let $\hat{\bm{p}}$ be an optimal solution to the Fenchel dual problem~\eqref{prob:fenchel-dual}, and let $(\hat{\bm{x}},\hat{\bm{z}})$ be an optimal solution to the convex relaxation problem~\eqref{relaxation}.
By strong duality and attainment of both optimal solutions, $(\hat{\bm x},\hat{\bm z})$ is also an optimal solution to problem~\eqref{prob:fenchel-dual}.
Since both the objective function and the feasible set separate with respect to $\bm{x}$ and $\bm{z}$, the Fenchel dual problem~\eqref{prob:fenchel-dual} can be decomposed into independent subproblems in $\bm{x}$ and $\bm{z}$.

We first consider the subproblem with respect to $\bm{x} \in \mathbb{R}^n$.
Let $\bm{\lambda}\in\mathbb{R}^{m_1}$ and
$\bm{\nu}\in\mathbb{R}_{+}^{m_2}$ denote the Lagrange multipliers associated with
$\bm{A}\bm{x}=\bm{b}$ and $\bm{W}\bm{x}\geq\bm{h}$, respectively.
Its Lagrangian is
\begin{align}
    \mathcal{L}_{\bm{x}}
    \left(
        \bm{x},\bm{\lambda},\bm{\nu};\bm{p}
    \right)
    :={}&
    \frac{1}{2}\bm{x}^{\top}\bm{Q}\bm{x}
    +\bm{c}^{\top}\bm{x}
    +\tau\bm{p}^{\top}\bm{x}
    \notag\\
    &+
    \bm{\lambda}^{\top}(\bm{A}\bm{x}-\bm{b})
    -
    \bm{\nu}^{\top}(\bm{W}\bm{x}-\bm{h}).
\end{align}
Since the feasible region of this subproblem is polyhedral, there exist Lagrange multipliers
$\hat{\bm{\lambda}}$ and $\hat{\bm{\nu}}\geq\bm{0}$ satisfying the KKT conditions.
In particular, the stationarity condition is
\begin{equation}
    \bm{Q}\hat{\bm{x}}
    +\bm{c}
    +\tau\hat{\bm{p}}
    +\bm{A}^{\top}\hat{\bm{\lambda}}
    -\bm{W}^{\top}\hat{\bm{\nu}}
    =
    \bm{0}.
\end{equation}
Therefore,
\begin{equation}
    \hat{\bm{p}}
    =
    -\frac{1}{\tau}
    \left(
        \bm{Q}\hat{\bm{x}}
        +\bm{c}
        +\bm{A}^{\top}\hat{\bm{\lambda}}
        -\bm{W}^{\top}\hat{\bm{\nu}}
    \right).
    \label{eq:pstar}
\end{equation}

We next consider the subproblem with respect to $\bm{z}\in [0, 1]^n$.
To minimize its objective function (Eq.~\eqref{obj:fenchel}), the variables $z_i$ corresponding to the largest values of $\tau(\hat{p}_i)^2/4$ should be preferentially set to one.
Therefore, under the cardinality constraint, one optimal solution is obtained by setting $z_i=1$ for the $\kappa$ assets with the largest values of $\tau(\hat{p}_i)^2/4$.
Substituting Eq.~\eqref{eq:pstar} into $\tau(\hat{p}_i)^2/4$, we define the screening score for each asset $i\in[n]$ as
\begin{equation}
    \delta_i
    :=
    \frac{\tau}{4}(\hat{p}_i)^2
    =
    \frac{1}{4\tau}
    \left[
        \bm{Q}\hat{\bm{x}}
        +\bm{c}
        +\bm{A}^{\top}\hat{\bm{\lambda}}
        -\bm{W}^{\top}\hat{\bm{\nu}}
    \right]_i^2.
    \label{eq:screening-score}
\end{equation}
Let $\sigma: [n] \to [n]$ be a permutation satisfying
\begin{equation}
    \delta_{\sigma(1)}
    \geq
    \delta_{\sigma(2)}
    \geq
    \cdots
    \geq
    \delta_{\sigma(n)}.
    \label{eq:screening-score-order}
\end{equation}
If ties occur, their ordering is chosen arbitrarily.
We define the top-$\kappa$ score set and its complement as
\begin{equation}
    \mathcal{S}_{\kappa}
    :=
    \{\sigma(j) \mid j \in [\kappa]\},
    \quad
    \mathcal{S}_{\kappa}^{c}
    :=
    [n]\setminus\mathcal{S}_{\kappa}
    =
    \{\sigma(\kappa + j) \mid j \in [n-\kappa]\}.
    \label{eq:top-complement-sets}
\end{equation}

\subsection{Exclusion and Inclusion Rules}
When the Fenchel dual variable is fixed at $\bm{p}=\hat{\bm{p}}$, the $\bm{z}$-subproblem is expressed as
\begin{align}
    \min_{\bm{z}\in\mathcal{Z}}
    \quad&
    -\sum_{i=1}^{n}\delta_i z_i.
    \label{prob:z-score-subproblem}
\end{align}
This problem admits an optimal solution in which $z_i=1$ for the assets belonging to $\mathcal{S}_{\kappa}$, and its optimal value is
\begin{equation}
    -\sum_{j=1}^{\kappa}\delta_{\sigma(j)}.
    \label{eq:z-subproblem-optimal-value}
\end{equation}
Here, $\delta_{\sigma(\kappa)}$ is the smallest score in $\mathcal{S}_{\kappa}$, whereas $\delta_{\sigma(\kappa+1)}$ is the largest score in $\mathcal{S}_{\kappa}^{c}$.
Hence, when an asset is moved into $\mathcal{S}_{\kappa}$, the least costly adjustment is to remove the asset with score $\delta_{\sigma(\kappa)}$; when an asset is removed from $\mathcal{S}_{\kappa}$, it is to add the asset with score $\delta_{\sigma(\kappa+1)}$.

\begin{proposition}[Safe Screening Rules]
\label{prop:safe-screening-rules}
Suppose that $1\leq\kappa<n$.
Let $f_{\mathrm{LB}}$ be the optimal objective value of the convex relaxation problem~\eqref{relaxation}, and let $f_{\mathrm{UB}}$ be a valid upper bound on the original problem~\eqref{binary-reformulation}.
Define the exclusion set $\mathcal{I}_0$ and the inclusion set $\mathcal{I}_1$ as
\begin{align}
    \mathcal{I}_0
    &:=
    \left\{
        i\in\mathcal{S}_{\kappa}^{c}
        \;\middle|\;
        f_{\mathrm{LB}}
        +\delta_{\sigma(\kappa)}-\delta_i
        >
        f_{\mathrm{UB}}
    \right\},
    \label{eq:remove-set}
    \\
    \mathcal{I}_1
    &:=
    \left\{
        i\in\mathcal{S}_{\kappa}
        \;\middle|\;
        f_{\mathrm{LB}}
        +\delta_i-\delta_{\sigma(\kappa+1)}
        >
        f_{\mathrm{UB}}
    \right\}.
    \label{eq:keep-set}
\end{align}
Then, for every optimal solution $(\bm{x}^\star,\bm{z}^\star)$ of problem~\eqref{binary-reformulation},
\begin{equation}
    z_i^\star=0
    \quad(i\in \mathcal{I}_0),
    \qquad
    z_i^\star=1
    \quad(i\in \mathcal{I}_1).
\end{equation}
\end{proposition}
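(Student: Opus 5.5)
Fix $i$ and derive a lower bound on the restricted relaxation, in which $z_i$ is fixed to $1$ (for $\mathcal{I}_0$) or to $0$ (for $\mathcal{I}_1$), by evaluating the Fenchel min–max at the fixed dual point $\hat{\bm p}$ instead of re-optimizing over $\bm p$. This is weak duality: for any restricted feasible set $\mathcal{F}_i\subseteq\mathcal{F}$,
\[
\min_{(\bm x,\bm z)\in\mathcal{F}_i}\max_{\bm p}\mathcal{L}_F(\bm x,\bm z,\bm p)\;\ge\;\min_{(\bm x,\bm z)\in\mathcal{F}_i}\mathcal{L}_F(\bm x,\bm z,\hat{\bm p}).
\]
The left side equals the optimal value of the relaxation with $z_i$ fixed, by Eq.~\eqref{eq:perspective-fenchel-representation}. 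It is in turn a lower bound on every feasible solution of problem~\eqref{binary-reformulation} with that value of $z_i$: such a solution is feasible in problem~\eqref{perspective}, whose objective agrees with problem~\eqref{binary-reformulation} and which lies inside the relaxation.

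Since $\mathcal{F}_i=\mathcal{X}\times\mathcal{Z}_i$ is still a product set and $\mathcal{L}_F(\cdot,\cdot,\hat{\bm p})$ is separable in $\bm x$ and $\bm z$, the right side splits as $m_x+m_z^{(i)}$. Here
\[
m_x=\min_{\bm x\in\mathcal{X}}\Bigl\{\tfrac12\bm x^\top\bm Q\bm x+\bm c^\top\bm x+\tau\hat{\bm p}^\top\bm x\Bigr\},
\qquad
m_z^{(i)}=\min_{\bm z\in\mathcal{Z}_i}\Bigl(-\sum_j\delta_j z_j\Bigr).
\]
The $\bm x$-part is unchanged by the restriction. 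Because $\hat{\bm p}$ is dual optimal, strong duality gives $f_{\mathrm{LB}}=m_x+m_z$, where $m_z=-\sum_{j=1}^{\kappa}\delta_{\sigma(j)}$ is the value in Eq.~\eqref{eq:z-subproblem-optimal-value}. Hence the restricted lower bound equals $f_{\mathrm{LB}}+(m_z^{(i)}-m_z)$, and the whole proof reduces to computing the increase $m_z^{(i)}-m_z$ in the linear $\bm z$-subproblem.

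The main step is this combinatorial computation over $\mathcal{Z}_i$, where $\mathcal{Z}$ is intersected with $\{z_i=1\}$ or $\{z_i=0\}$. All $\delta_j\ge 0$ and $\mathcal{Z}$ is the capped-simplex polytope, whose vertices are integral. So the minimum of $-\sum_j\delta_j z_j$ over $\mathcal{Z}_i$ is attained by selecting the largest scores subject to the fixing.
\begin{itemize}
\item Case $z_i=1$ with $i\in\mathcal{S}_\kappa^c$: choose $i$ together with the $\kappa-1$ largest other scores, namely $\sigma(1),\dots,\sigma(\kappa-1)$. Then $m_z^{(i)}=m_z+\delta_{\sigma(\kappa)}-\delta_i$.
\item Case $z_i=0$ with $i\in\mathcal{S}_\kappa$: choose $\mathcal{S}_\kappa\setminus\{i\}$ together with $\sigma(\kappa+1)$, which exists since $\kappa<n$. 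Then $m_z^{(i)}=m_z-\delta_{\sigma(\kappa+1)}+\delta_i$.
\end{itemize}
For optimality I would argue by an exchange argument, or via LP duality for $\max\{\sum_j\delta_jz_j : \bm 1^\top\bm z\le\kappa,\ 0\le\bm z\le\bm 1\}$ with $z_i$ fixed. Ties are harmless because both formulas depend only on the sorted values.

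To conclude, suppose $i\in\mathcal{I}_0$ and some optimal solution $(\bm x^\star,\bm z^\star)$ has $z_i^\star=1$. Then
\[
f^\star\;\ge\; f_{\mathrm{LB}}+\delta_{\sigma(\kappa)}-\delta_i\;>\;f_{\mathrm{UB}}\;\ge\;f^\star,
\]
a contradiction, so $z_i^\star=0$. The case $i\in\mathcal{I}_1$ is symmetric and gives $z_i^\star=1$.

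The point needing most care is the identity $f_{\mathrm{LB}}=m_x+m_z$ at $\hat{\bm p}$, namely that the inner minimum at the dual optimum equals $f_{\mathrm{LB}}$. This follows from the strong duality and attainment stated before Eq.~\eqref{prob:fenchel-dual}. A second point is that $m_x$ is finite. If it were $-\infty$ then $f_{\mathrm{LB}}$ would be $-\infty$, which contradicts the attainment of an optimal solution of problem~\eqref{relaxation}.
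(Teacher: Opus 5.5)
Your proposal is correct and follows the same route as the paper: freeze the dual point $\hat{\bm p}$, use separability so that fixing $z_i$ changes only the linear $\bm z$-subproblem, and obtain the increase $\delta_{\sigma(\kappa)}-\delta_i$ or $\delta_i-\delta_{\sigma(\kappa+1)}$ by swapping with the threshold asset. Your version makes explicit what the paper's one-line argument leaves implicit: weak duality at the fixed $\hat{\bm p}$, the identity $f_{\mathrm{LB}}=m_x+m_z$, the finiteness of $m_x$, and the greedy solution of the capped-simplex LP.
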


\begin{proof}
We first consider an asset $i\in\mathcal{S}_{\kappa}^{c}$.
When the constraint $z_i=1$ is imposed, asset $i$ replaces the asset corresponding to $\delta_{\sigma(\kappa)}$, which has the smallest score among the assets in $\mathcal{S}_{\kappa}$.
By evaluating the resulting change in the objective value of the $\bm{z}$-subproblem~\eqref{prob:z-score-subproblem}, we obtain the valid lower bound
\begin{equation}
f_{\text{LB}}
+
\delta_{\sigma(\kappa)}-\delta_i.
\end{equation}
If this lower bound exceeds $f_{\mathrm{UB}}$, no optimal solution to problem~\eqref{binary-reformulation} can satisfy $z_i=1$.
Thus, $z_i$ can be safely fixed to zero, which yields the exclusion rule.

Next, we consider an asset $i\in\mathcal{S}_{\kappa}$.
When the constraint $z_i=0$ is imposed, asset $i$ is replaced by the asset corresponding to $\delta_{\sigma(\kappa+1)}$.
Similarly, we obtain the valid lower bound
\begin{equation}
f_{\mathrm{LB}}
+
\delta_i-\delta_{\sigma(\kappa+1)}.
\label{eq:fix-zero-lower-bound}
\end{equation}
If this lower bound exceeds $f_{\mathrm{UB}}$, no optimal solution to problem~\eqref{binary-reformulation} can satisfy $z_i=0$.
Thus, $z_i$ can be safely fixed to one, which yields the inclusion rule.
\end{proof}

\subsection{Algorithm}
Algorithm~\ref{alg:safe-screening} summarizes our safe screening procedure for the cardinality-constrained portfolio optimization problem~\eqref{original}.
This algorithm computes a lower bound and screening scores from the convex relaxation problem~\eqref{relaxation} and compares them with an upper bound obtained from a feasible solution to the original problem~\eqref{binary-reformulation}, thereby safely fixing some of the binary variables.
Using the sets $\mathcal{I}_0$ and $\mathcal{I}_1$ obtained by Algorithm~\ref{alg:safe-screening}, we solve the reduced problem~\eqref{binary-soc} obtained by fixing
\begin{equation}
z_i=0\quad(i\in \mathcal{I}_0),
\qquad
z_i=1\quad(i\in \mathcal{I}_1)
\end{equation}
without losing the optimality guarantee.

\begin{algorithm}[t]
\caption{Safe Screening for Cardinality-Constrained Portfolio Optimization Problem~\eqref{original}}
\label{alg:safe-screening}
\begin{algorithmic}[1]
\State $\mathcal{I}_0\gets\emptyset,\ \mathcal{I}_1\gets\emptyset$.

\State Solve the convex relaxation problem
\eqref{relaxation-soc} and obtain its
optimal value $f_{\mathrm{LB}}$, a primal optimal solution
$(\hat{\bm{x}},\hat{\bm{z}})$, and optimal Lagrange multipliers
$\hat{\bm{\lambda}}$ and $\hat{\bm{\nu}}$.

\State Compute $\hat{\bm{p}}$ from Eq.~\eqref{eq:pstar} and calculate
$\delta_i$ for all $i\in[n]$ from Eq.~\eqref{eq:screening-score}.

\State Determine a permutation $\sigma$ satisfying
Eq.~\eqref{eq:screening-score-order}, and define
$\mathcal{S}_{\kappa}$ and $\mathcal{S}_{\kappa}^{c}$
according to Eq.~\eqref{eq:top-complement-sets}.

\State Construct a feasible solution to the original problem~\eqref{binary-reformulation} and let
its objective value be $f_{\mathrm{UB}}$.

\For{$i\in\mathcal{S}_{\kappa}^{c}$}
    \If{$f_{\mathrm{LB}}
        +\delta_{\sigma(\kappa)}-\delta_i
        >f_{\mathrm{UB}}$}
        \State $\mathcal{I}_0\gets \mathcal{I}_0\cup\{i\}$.
    \EndIf
\EndFor

\For{$i\in\mathcal{S}_{\kappa}$}
    \If{$f_{\mathrm{LB}}
        +\delta_i-\delta_{\sigma(\kappa+1)}
        >f_{\mathrm{UB}}$}
        \State $\mathcal{I}_1\gets \mathcal{I}_1\cup\{i\}$.
    \EndIf
\EndFor

\State \Return $(\mathcal{I}_0,\mathcal{I}_1)$.
\end{algorithmic}
\end{algorithm}

\section{Numerical Experiments}
This section presents the settings and results of the numerical experiments conducted to evaluate the effectiveness of our method.
All experiments were performed on a MacBook Air equipped with an Apple M3 processor and 24 GB of memory.
The optimization problems were solved using Gurobi Optimizer\footnote{\url{https://www.gurobi.com/}} 11.0.3.
\subsection{Experimental Setting}
\label{subsec:experimental-setting}
\begin{table*}[t]
\centering
\caption{Results for the S\&P 500 dataset.}
\label{tab:sp500_results}
\setlength{\tabcolsep}{4pt}
\begin{tabular}{cccrrrrrrrr}
\toprule
\multicolumn{3}{c}{Parameters}
&
\multicolumn{3}{c}{Screening}
&
\multicolumn{5}{c}{Time [s]}
\\
\cmidrule(lr){1-3}
\cmidrule(lr){4-6}
\cmidrule(lr){7-11}
$\kappa$
& $\tau$
& $r_{\min}$
& $|\mathcal{I}_0|$
& $|\mathcal{I}_1|$
& Rate
& $T_{\mathrm{scr}}$
& $T_{\mathrm{red}}$
& $T_{\mathrm{prop}}$
& $T_{\mathrm{dir}}$
& Speedup
\\
\midrule
\multirow[t]{9}{*}{40}
& \multirow[t]{3}{*}{0.001}
& 0.3\% & 0 & 15 & 3.7\% & 1.0 & 130.5 & 131.5 & 114.4 & 0.9$\times$ \\
& & 0.5\% & 0 & 31 & 7.7\% & 1.3 & 0.6 & 1.8 & 4.2 & 2.3$\times$ \\
& & 0.7\% & 0 & 11 & 2.7\% & 1.2 & 0.1 & 1.3 & 0.3 & 0.2$\times$ \\
\cmidrule(lr){2-11}
& \multirow[t]{3}{*}{0.005}
& 0.3\% & 301 & 10 & 77.6\% & 1.3 & 16.4 & 17.7 & 183.3 & 10.4$\times$ \\
& & 0.5\% & 353 & 35 & 96.8\% & 1.3 & 0.6 & 1.9 & 94.5 & 50.5$\times$ \\
& & 0.7\% & 0 & 15 & 3.7\% & 1.4 & 0.2 & 1.6 & 0.3 & 0.2$\times$ \\
\cmidrule(lr){2-11}
& \multirow[t]{3}{*}{0.010}
& 0.3\% & 303 & 10 & 78.1\% & 1.3 & 18.4 & 19.6 & 263.3 & 13.4$\times$ \\
& & 0.5\% & 350 & 36 & 96.3\% & 1.5 & 0.6 & 2.1 & 77.1 & 37.6$\times$ \\
& & 0.7\% & 0 & 15 & 3.7\% & 1.4 & 0.3 & 1.8 & 0.5 & 0.3$\times$ \\
\midrule
\multirow[t]{9}{*}{60}
& \multirow[t]{3}{*}{0.001}
& 0.3\% & 0 & 31 & 7.7\% & 1.6 & 54.3 & 56.0 & 27.4 & 0.5$\times$ \\
& & 0.5\% & 0 & 31 & 7.7\% & 1.4 & 0.5 & 1.9 & 2.0 & 1.1$\times$ \\
& & 0.7\% & 0 & 11 & 2.7\% & 1.4 & 0.2 & 1.6 & 0.3 & 0.2$\times$ \\
\cmidrule(lr){2-11}
& \multirow[t]{3}{*}{0.005}
& 0.3\% & 296 & 26 & 80.3\% & 1.9 & 6.9 & 8.9 & 194.5 & 21.9$\times$ \\
& & 0.5\% & 323 & 49 & 92.8\% & 1.6 & 1.5 & 3.1 & 34.4 & 11.1$\times$ \\
& & 0.7\% & 0 & 15 & 3.7\% & 1.5 & 0.2 & 1.7 & 0.4 & 0.2$\times$ \\
\cmidrule(lr){2-11}
& \multirow[t]{3}{*}{0.010}
& 0.3\% & 295 & 25 & 79.8\% & 1.7 & 6.0 & 7.7 & 127.5 & 16.5$\times$ \\
& & 0.5\% & 328 & 47 & 93.5\% & 1.4 & 1.1 & 2.5 & 42.7 & 16.9$\times$ \\
& & 0.7\% & 0 & 15 & 3.7\% & 1.5 & 0.3 & 1.8 & 0.5 & 0.3$\times$ \\
\midrule
\multirow[t]{9}{*}{80}
& \multirow[t]{3}{*}{0.001}
& 0.3\% & 0 & 45 & 11.2\% & 1.6 & 2.3 & 4.0 & 19.2 & 4.9$\times$ \\
& & 0.5\% & 0 & 30 & 7.5\% & 1.6 & 0.5 & 2.0 & 1.3 & 0.7$\times$ \\
& & 0.7\% & 0 & 11 & 2.7\% & 1.7 & 0.2 & 1.8 & 0.3 & 0.2$\times$ \\
\cmidrule(lr){2-11}
& \multirow[t]{3}{*}{0.005}
& 0.3\% & 293 & 56 & 87.0\% & 1.5 & 0.9 & 2.4 & 393.3 & 163.5$\times$ \\
& & 0.5\% & 0 & 55 & 13.7\% & 1.8 & 2.6 & 4.4 & 2.8 & 0.6$\times$ \\
& & 0.7\% & 0 & 15 & 3.7\% & 1.9 & 0.2 & 2.0 & 0.4 & 0.2$\times$ \\
\cmidrule(lr){2-11}
& \multirow[t]{3}{*}{0.010}
& 0.3\% & 294 & 57 & 87.5\% & 1.8 & 4.4 & 6.2 & 93.3 & 15.1$\times$ \\
& & 0.5\% & 311 & 75 & 96.3\% & 2.1 & 1.1 & 3.2 & 2.7 & 0.9$\times$ \\
& & 0.7\% & 0 & 15 & 3.7\% & 1.9 & 0.3 & 2.2 & 0.5 & 0.2$\times$ \\
\bottomrule
\end{tabular}
\end{table*}
We evaluated our screening rules using weekly returns of the constituent stocks of the S\&P 500 and Russell 2000 over the period from 2005 to 2024.
The returns were computed from adjusted closing prices obtained from Yahoo! Finance\footnote{\url{https://finance.yahoo.com/}}.
We retained assets with returns available for at least 90\% of the aforementioned periods and then removed periods containing any missing values.
After this preprocessing, the S\&P 500 dataset contained $n=401$ assets and $T=946$ observations,
whereas the Russell 2000 dataset contained $n=812$ assets and $T=942$ observations.

Let $\bm{\mu}\in\mathbb{R}^{n}$ and $\bm{\Sigma}\in\mathbb{S}_{+}^{n}$ denote the sample mean vector and sample covariance matrix, respectively, computed from the weekly return observations.
The parameter $r_{\min}$ denotes the minimum required weekly return.
For each dataset, we considered the following cardinality-constrained mean--variance portfolio optimization model:
\begin{subequations}\label{experiment-model}
\begin{align}
\min_{\bm{x}\in\mathbb{R}^{n}}
\quad&
\frac{1}{2}\bm{x}^{\top}\bm{\Sigma}\bm{x}
+
\tau\|\bm{x}\|_{2}^{2}
\label{prob:numerical-model}
\\
\mathrm{s.~t.}\quad
&\|\bm{x}\|_{0}\le\kappa,\label{cons:ex-card}\\
&\bm{1}^{\top}\bm{x}=1,\label{cons:ex-budget}\\
&\bm{\mu}^{\top}\bm{x}\ge r_{\min},\label{cons:ex-minreturn}\\
&\bm{x}\ge\bm{0}.\label{cons:ex-longonly}
\end{align}
\end{subequations}
Eq.~\eqref{cons:ex-budget} requires the investment weights across all assets to sum to one.
Eq.~\eqref{cons:ex-minreturn} requires the expected return of the portfolio to be at least the minimum required return.
Eq.~\eqref{cons:ex-longonly} is the nonnegativity constraint, which prohibits short selling. 

We tested all combinations of $\tau\in\{0.001,0.005,0.010\}$ for the regularization parameter, $r_{\min}\in\{0.3\%,0.5\%,0.7\%\}$ for the minimum required return, and the cardinality parameter $\kappa\in\{40,60,80\}$ for the S\&P 500 dataset and $\kappa\in\{80,120,160\}$ for the Russell 2000 dataset.
These cardinality limits correspond to approximately 10\%, 15\%, and 20\% of the available assets, yielding 27 problem instances per dataset.
A feasible upper bound $f_{\text{UB}}$ was constructed by selecting the $\kappa$ largest components of convex relaxation solution $\hat{\bm z}$, replacing low-$\hat{z}_i$ assets with assets satisfying $\mu_i\ge r_{\min}$ when necessary, and re-optimizing the weights of the selected assets.

We used the following evaluation metrics:
\begin{itemize}
    \item \textbf{Number of excluded assets ($|\mathcal{I}_0|$)}:
    the number of assets fixed to zero by our screening rules.

    \item \textbf{Number of included assets ($|\mathcal{I}_1|$)}:
    the number of assets fixed to one by our screening rules.

    \item \textbf{Screening rate}:
    the percentage of assets fixed by our screening rules, defined as
    \begin{equation}
        \mathrm{Rate}
        :=
        \frac{|\mathcal{I}_0|+|\mathcal{I}_1|}{n}\times100\%.
        \label{eq:screening-rate}
    \end{equation}

    \item \textbf{Screening time ($T_{\mathrm{scr}}$)}:
    the time required to solve the convex relaxation problem~\eqref{relaxation-soc}, construct the upper bound, compute the screening scores, and apply the variable-fixing rules.

    \item \textbf{Reduced solution time ($T_{\mathrm{red}}$)}:
    the time required to solve the reduced problem~\eqref{binary-soc} after applying the screening rules.

    \item \textbf{Total proposed time ($T_{\mathrm{prop}}$)}:
    the total computation time of our method, defined as $T_{\mathrm{prop}}
        :=
        T_{\mathrm{scr}}+T_{\mathrm{red}}$.

    \item \textbf{Direct solution time ($T_{\mathrm{dir}}$)}:
    the time required to solve problem~\eqref{binary-soc} without screening.

    \item \textbf{Speedup}:
    the ratio of the direct solution time to the total proposed time, defined as $T_{\mathrm{dir}}/{T_{\mathrm{prop}}}$.
\end{itemize}
A time limit of 1800 seconds was imposed on each exact optimization problem~\eqref{binary-soc}.
\begin{table*}[t]
\centering
\caption{Results for the Russell 2000 dataset.}
\label{tab:russell2000_results}
\setlength{\tabcolsep}{4pt}
\begin{tabular}{cccrrrrrrrr}
\toprule
\multicolumn{3}{c}{Parameters}
&
\multicolumn{3}{c}{Screening}
&
\multicolumn{5}{c}{Time [s]}
\\
\cmidrule(lr){1-3}
\cmidrule(lr){4-6}
\cmidrule(lr){7-11}
$\kappa$
& $\tau$
& $r_{\min}$
& $|\mathcal{I}_0|$
& $|\mathcal{I}_1|$
& Rate
& $T_{\mathrm{scr}}$
& $T_{\mathrm{red}}$
& $T_{\mathrm{prop}}$
& $T_{\mathrm{dir}}$
& Speedup
\\
\midrule
\multirow[t]{9}{*}{80}
& \multirow[t]{3}{*}{0.001}
& 0.3\% & 0 & 25 & 3.1\% & 5.7 & 308.4 & 314.1 & $>$1800.0 & $>$5.7$\times$ \\
& & 0.5\% & 0 & 44 & 5.4\% & 5.4 & 15.2 & 20.6 & 1291.0 & 62.7$\times$ \\
& & 0.7\% & 0 & 46 & 5.7\% & 5.5 & 6.4 & 12.0 & 8.0 & 0.7$\times$ \\
\cmidrule(lr){2-11}
& \multirow[t]{3}{*}{0.005}
& 0.3\% & 0 & 3 & 0.4\% & 6.4 & $>$1800.0 & $>$1800.0 & $>$1800.0 & \multicolumn{1}{c}{N/A} \\
& & 0.5\% & 0 & 16 & 2.0\% & 4.9 & $>$1800.0 & $>$1800.0 & $>$1800.0 & \multicolumn{1}{c}{N/A} \\
& & 0.7\% & 0 & 38 & 4.7\% & 5.3 & $>$1800.0 & $>$1800.0 & $>$1800.0 & \multicolumn{1}{c}{N/A} \\
\cmidrule(lr){2-11}
& \multirow[t]{3}{*}{0.010}
& 0.3\% & 0 & 0 & 0.0\% & 5.4 & $>$1800.0 & $>$1800.0 & $>$1800.0 & \multicolumn{1}{c}{N/A} \\
& & 0.5\% & 0 & 7 & 0.9\% & 5.1 & $>$1800.0 & $>$1800.0 & $>$1800.0 & \multicolumn{1}{c}{N/A} \\
& & 0.7\% & 0 & 22 & 2.7\% & 5.9 & $>$1800.0 & $>$1800.0 & $>$1800.0 & \multicolumn{1}{c}{N/A} \\
\midrule
\multirow[t]{9}{*}{120}
& \multirow[t]{3}{*}{0.001}
& 0.3\% & 0 & 35 & 4.3\% & 5.8 & 9.2 & 15.0 & 11.3 & 0.8$\times$ \\
& & 0.5\% & 0 & 46 & 5.7\% & 6.2 & 7.1 & 13.2 & 10.1 & 0.8$\times$ \\
& & 0.7\% & 0 & 45 & 5.5\% & 6.2 & 5.7 & 11.9 & 9.6 & 0.8$\times$ \\
\cmidrule(lr){2-11}
& \multirow[t]{3}{*}{0.005}
& 0.3\% & 0 & 37 & 4.6\% & 7.8 & $>$1800.0 & $>$1800.0 & $>$1800.0 & \multicolumn{1}{c}{N/A} \\
& & 0.5\% & 0 & 67 & 8.3\% & 6.5 & 850.3 & 856.8 & 1272.9 & 1.5$\times$ \\
& & 0.7\% & 0 & 77 & 9.5\% & 8.3 & 10.3 & 18.7 & 62.1 & 3.3$\times$ \\
\cmidrule(lr){2-11}
& \multirow[t]{3}{*}{0.010}
& 0.3\% & 0 & 22 & 2.7\% & 11.1 & $>$1800.0 & $>$1800.0 & $>$1800.0 & \multicolumn{1}{c}{N/A} \\
& & 0.5\% & 619 & 63 & 84.0\% & 5.8 & 83.0 & 88.7 & 770.2 & 8.7$\times$ \\
& & 0.7\% & 0 & 68 & 8.4\% & 12.1 & 576.4 & 588.5 & 1482.8 & 2.5$\times$ \\
\midrule
\multirow[t]{9}{*}{160}
& \multirow[t]{3}{*}{0.001}
& 0.3\% & 0 & 36 & 4.4\% & 11.6 & 10.1 & 21.7 & 11.0 & 0.5$\times$ \\
& & 0.5\% & 0 & 46 & 5.7\% & 11.8 & 6.7 & 18.5 & 226.2 & 12.2$\times$ \\
& & 0.7\% & 0 & 46 & 5.7\% & 10.9 & 5.8 & 16.7 & 19.0 & 1.1$\times$ \\
\cmidrule(lr){2-11}
& \multirow[t]{3}{*}{0.005}
& 0.3\% & 0 & 93 & 11.5\% & 17.6 & 147.1 & 164.7 & 735.5 & 4.5$\times$ \\
& & 0.5\% & 0 & 90 & 11.1\% & 12.4 & 11.3 & 23.6 & 12.8 & 0.5$\times$ \\
& & 0.7\% & 0 & 81 & 10.0\% & 12.5 & 11.1 & 23.6 & $>$1800.0 & $>76.3\times$ \\
\cmidrule(lr){2-11}
& \multirow[t]{3}{*}{0.010}
& 0.3\% & 0 & 88 & 10.8\% & 18.0 & 1010.0 & 1028.0 & 1152.7 & 1.1$\times$ \\
& & 0.5\% & 0 & 102 & 12.6\% & 10.9 & 436.0 & 446.8 & $>$1800.0 & $>4.0\times$ \\
& & 0.7\% & 0 & 106 & 13.1\% & 13.3 & 12.0 & 25.4 & 12.8 & 0.5$\times$ \\
\bottomrule
\end{tabular}
\end{table*}
\subsection{Results for the S\&P 500 dataset}
Table~\ref{tab:sp500_results} shows the results for the S\&P 500 dataset.
For this dataset, our screening rules were particularly effective when the regularization parameter $\tau$ was moderate or large.
With \(\tau\in\{0.005,0.010\}\) and \(r_{\min}\in\{0.3\%,0.5\%\}\), the screening rate was high in most cases, ranging from 77.6\% to 96.8\%. 
In these cases, many assets were safely fixed, and this led to substantial reductions in computation time.
For example, when $\kappa=80$, $\tau=0.005$, and $r_{\min}=0.3\%$, our method fixed the weights at zero for 293 assets and at one for 56 assets, reducing the total computation time from 393.3 seconds to 2.4 seconds.
This corresponds to a speedup rate of 163.5 times.

The results also show a dependence on the return requirement.
For $r_{\min}=0.3\%$ and $0.5\%$, our rules often achieved high screening rates and large speedups, especially for $\tau=0.005$ and $\tau=0.010$.
In contrast, when $r_{\min}=0.7\%$, the screening rate remained low for all values of $\kappa$ and $\tau$; however, note that these problem instances were solved directly within one second even without screening.
Overall, the largest improvements were observed on computationally challenging instances, indicating that our screening rules can be particularly effective when the original problem is difficult to solve.
\subsection{Results for the Russell 2000 dataset}
\label{subsec:russell-results}
Table~\ref{tab:russell2000_results} shows the results for the Russell 2000 dataset.
For this dataset, the screening and computational results varied substantially across parameter settings.
Compared with the results for the S\&P 500 dataset, exclusion fixings were rare, whereas inclusion fixings occurred in most parameter settings. 
Thus, the two types of screening rules exhibited markedly different behavior on this dataset.

Despite the lower screening rates in many cases, our method achieved substantial improvements in several difficult problem instances.
For example, when $\kappa=80$, $\tau=0.001$, and
$r_{\min}=0.5\%$, only 5.4\% of the variables were fixed, but the total computation time was reduced from 1291.0 seconds to 20.6 seconds, corresponding to a speedup of 62.7 times.
Moreover, when $(\kappa,\tau,r_{\min})=(160,0.005,0.7\%)$ and $(160,0.010,0.5\%)$, the direct solution reached the time limit, whereas our method completed in 23.6 and 446.8 seconds, respectively.
The corresponding screening rates were 10.0\% and 12.6\%.
These results indicate that even a moderate screening rate can substantially accelerate computation and enable some instances that are not solved within the time limit by direct optimization to be solved to optimality.

Although the screening rate did not increase monotonically with the cardinality limit in every parameter setting, an overall tendency toward higher screening rates was observed as $\kappa$ increased.
Overall, although the effectiveness of our method varied across the Russell 2000 instances, it provided substantial improvements in several computationally challenging cases.

\section{Conclusion}
We proposed safe screening rules for cardinality-constrained portfolio optimization problems with convex quadratic objective functions and linear constraints.
Specifically, by applying the perspective relaxation and deriving its Fenchel dual, we obtained screening scores that incorporate the Lagrange multipliers associated with the linear constraints.
Furthermore, by comparing a lower bound obtained from the convex relaxation problem with an upper bound obtained from a feasible solution to the original problem, we established exclusion and inclusion rules that can safely fix binary variables to either $0$ or $1$ without eliminating any optimal solution of the original problem.

The computational results demonstrated that our screening rules can substantially improve exact solution procedures for cardinality-constrained portfolio optimization.
The improvements were especially pronounced in challenging instances with moderate or strong regularization and less restrictive return requirements.
For the S\&P 500 dataset, the rules often fixed a large fraction of asset-selection variables, leading to significant speedups.
For the Russell 2000 dataset, although the screening effect was more heterogeneous, the reduced problems were solved to global optimality in several cases where direct optimization reached the time limit.
Thus, our framework offers a promising optimality-preserving preprocessing technique for large-scale cardinality-constrained portfolio optimization.

The number of variables that can be fixed by the screening rules,
however, depends on the tightness of the lower bound obtained from the
convex relaxation problem and the quality of the upper bound obtained
from a feasible solution.
Future work therefore includes developing stronger convex relaxations and efficient heuristics for constructing high-quality feasible solutions.
Beyond improving these bounds, another important direction is to develop more efficient screening methods that achieve stronger variable reduction while controlling the associated computational overhead.
One possible approach is to extend the screening-based cut-generation and cut-selection framework~\cite{tan2026fast} to the linearly constrained portfolio optimization setting considered in this study.

\bibliographystyle{ACM-Reference-Format}
\bibliography{references}

\end{document}